\journalname{}
\begin{document}

\title{Explicit expressions for higher order convolutions of Cauchy numbers}


\author{Jos\'{e} A. Adell         \and
        Alberto Lekuona
}


\institute{Jos\'{e} A. Adell \at
              Departamento de M\'{e}todos Estad\'{\i}sticos, Facultad de
Ciencias, Universidad de Zaragoza, 50009 Zaragoza (Spain)\\
              \email{adell@unizar.es}           
           \and
           Alberto Lekuona \at
             Departamento de M\'{e}todos Estad\'{\i}sticos, Facultad de
Ciencias, Universidad de Zaragoza, 50009 Zaragoza (Spain)\\
              \email{lekuona@unizar.es}
}

\date{Received: date / Accepted: date}

\maketitle

\begin{abstract}
We give explicit expressions for higher order convolutions of Cauchy numbers, either as one single integral or in terms of the Stirling numbers of the first and second kinds.

\keywords{Cauchy numbers \and convolution identity \and Stirling numbers \and generating function \and uniform distribution.
}
\subclass{05A19 \and 60E05}
\end{abstract}

\section{Introduction and main result}

The Cauchy numbers $\boldsymbol{c}=(c_n)_{n\geq 0}$ are defined (see, for instance, Comtet \cite[Ch. VII]{Com1974} or Merlini \textit{et al.} \cite{MerSprVer2006}) via their generating function as
\begin{equation}\label{eq1.1}
    G(\boldsymbol{c},z)=\dfrac{z}{\log (1+z)}=\sum_{n=0}^\infty c_n \dfrac{z^n}{n!},\quad z\in \mathds{C},\quad |z|<1,
\end{equation}
or in integral form as
\begin{equation}\label{eq1.2}
    c_n=\int_0^1 (\theta)_n\, d\theta,\quad n=0,1,\ldots ,
\end{equation}
where $(\theta)_n$ is the descending factorial, i.e., $(\theta)_n=\theta (\theta-1)\cdots (\theta -n+1)$, $n=1,2,\ldots$, $(\theta)_0=1$. Different generalizations of these numbers can be found in Komatsu and Yuan \cite{KomYua2017}, Pyo \textit{et al.} \cite{PyoKimRim2018}, and the references therein.

The starting point of this note is a recent paper by Komatsu and Simsek \cite{KomSim2016}, in which these authors pose the problem of finding rational numbers $a_0,\ldots ,a_{m-1}$, such that
\begin{equation}\label{eq1.3}
    \sum_{\substack{l_1+\cdots +l_m=\mu\\l_1,\ldots ,l_m\geq 0}} \dfrac{\mu!}{l_1!\cdots l_m!} \sum_{\substack{k_1+\cdots +k_m=n\\k_1,\ldots ,k_m\geq 0}} \dfrac{n!}{k_1!\cdots k_m!}c_{k_1+l_1}\cdots c_{k_m+l_m}=\sum_{j=0}^{m-1}a_j c_{n+\mu -j}.
\end{equation}
Actually, Komatsu and Simsek \cite{KomSim2016} find explicit formulae for $m=3$ and $m=4$ using umbral calculus. Observe that the right-hand side in \eqref{eq1.3} depends on the Cauchy numbers themselves.

The aim of this note is to provide explicit expressions for the left-hand side in \eqref{eq1.3} only depending upon the classical Stirling numbers of the first and second kinds. In other words, to provide explicit formulae which are easy to compute. Our methodology, which makes use of probabilistic representations in terms of sums of independent identically distributed random variables having the uniform distribution on $[0,1]$, also allows us to write the left-hand side in \eqref{eq1.3} as one single integral. This could be useful for theoretical purposes.

Throughout this note, we will use the following notations. Let $\mathds{N}$ be the set of positive integers and $\mathds{N}_0=\mathds{N}\cup \{0\}$. Unless otherwise specified, we assume from now on that $n,\mu\in \mathds{N}_0$, $m\in \mathds{N}$, and $z\in \mathds{C}$ with $|z|<r$, where $r>0$ may change from line to line. Recall that the Stirling numbers of the first and second kind, respectively denoted by $s(n,k)$ and $S(n,k)$, $k=0,1\ldots ,n$, are defined (see, for instance, Abramowitz and Stegun \cite[p. 824]{AbrSte1964}) by
\begin{equation}\label{eq1.4}
    (x)_n=\sum_{k=0}^n s(n,k)x^k,\qquad x^n=\sum_{k=0}^n S(n,k) (x)_k,\quad x\in \mathds{R}.
\end{equation}
On the other hand, we define the spline function
\begin{equation}\label{eq1.5}
    \rho_m(\theta)=\dfrac{1}{(m-1)!} \sum_{k=0}^{m-1}\binom{m}{k}(-1)^k (\theta-k)_+^{m-1},\quad \theta \in [0,m],
\end{equation}
where $x_+=\max (x,0)$. Finally, we denote by
\begin{equation*}
    \binom{n}{k_1,\ldots ,k_m}=\dfrac{n!}{k_1!\cdots k_m!},\quad k_1,\ldots ,k_m\in \mathds{N}_0,\quad k_1+\cdots +k_m=n,
\end{equation*}
the multinomial coefficient. With these notations, our main result is the following.

\begin{theorem}\label{th1}
We have
\begin{equation}\label{eq1.6}
    \begin{split}
       & \sum_{l_1+\cdots +l_m=\mu} \binom{\mu}{l_1,\cdots ,l_m} \sum_{k_1+\cdots +k_m=n} \binom{n}{k_1,\cdots ,k_m}c_{k_1+l_1}\cdots c_{k_m+l_m}\\
        & =\int_0^m (\theta)_{\mu+n}\rho_m(\theta)\, d\theta= \sum_{k=0}^{\mu+n}\dfrac{s(\mu+n,k)S(m+k,m)}{\binom{m+k}{m}}.
    \end{split}
\end{equation}
\end{theorem}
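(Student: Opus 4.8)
The plan is to exploit the probabilistic representation hinted at in the introduction. Let $U_1, U_2, \ldots$ be independent random variables, each uniformly distributed on $[0,1]$. The starting point is the observation that \eqref{eq1.2} reads $c_n = \mathbb{E}\,(U_1)_n$, so that $c_n$ is the $n$-th descending-factorial moment of a single uniform. More is true: using the generating function \eqref{eq1.1} and the fact that $G(\boldsymbol{c},z)^m$ generates the $m$-fold convolution, one gets
\begin{equation*}
    \Bigl(\dfrac{z}{\log(1+z)}\Bigr)^m = \sum_{N=0}^\infty \Bigl(\sum_{k_1+\cdots+k_m=N}\binom{N}{k_1,\ldots,k_m} c_{k_1}\cdots c_{k_m}\Bigr)\dfrac{z^N}{N!}.
\end{equation*}
First I would identify the left-hand side of \eqref{eq1.6}. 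Collecting the two multinomial sums and writing $N = \mu + n$, the quantity in question is the coefficient extraction that picks out, from the product $\prod_{j=1}^m G(\boldsymbol{c},z)$ suitably shifted, the combined convolution at total index $\mu+n$; concretely, it should equal $\mathbb{E}\,(U_1 + \cdots + U_m)_{\mu+n}$ up to the spline weight, or more precisely it is the $(\mu+n)$-th descending-factorial-type moment of the sum $S_m = U_1 + \cdots + U_m$. The key identity to nail down here is that
\begin{equation*}
    \sum_{l_1+\cdots+l_m=\mu}\binom{\mu}{l_1,\ldots,l_m}\sum_{k_1+\cdots+k_m=n}\binom{n}{k_1,\ldots,k_m} c_{k_1+l_1}\cdots c_{k_m+l_m} = \mathbb{E}\,(S_m)_{\mu+n},
\end{equation*}
which one checks by comparing generating functions: multiplying the $m$ copies of $z/\log(1+z)$ and matching the binomial convolution structure on the left with the product structure on the right. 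Since $S_m$ has density $\rho_m$ on $[0,m]$ (the classical Irwin–Hall / B-spline density, exactly \eqref{eq1.5}), this expectation is $\int_0^m (\theta)_{\mu+n}\,\rho_m(\theta)\,d\theta$, giving the middle expression in \eqref{eq1.6}.

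For the last equality I would expand $(\theta)_{\mu+n} = \sum_{k=0}^{\mu+n} s(\mu+n,k)\theta^k$ via \eqref{eq1.4} and interchange sum and integral, reducing everything to the moments $\int_0^m \theta^k \rho_m(\theta)\,d\theta = \mathbb{E}\,S_m^k$. The task is then to show $\mathbb{E}\,S_m^k = S(m+k,m)/\binom{m+k}{m}$. This I would prove by again using \eqref{eq1.4}: write $\theta^k = \sum_{i} S(k,i)(\theta)_i$ is not quite the cleanest route; instead I would use the known evaluation $\mathbb{E}\,S_m^k$ through the exponential generating function $\mathbb{E}\,e^{tS_m} = \bigl((e^t-1)/t\bigr)^m$, whose coefficients are governed by Stirling numbers of the second kind — indeed $\bigl((e^t-1)/t\bigr)^m = \sum_{k\geq 0} \frac{m!}{(m+k)!}S(m+k,m)\, t^k$ is a standard identity, and $m!\,k!/(m+k)! = 1/\binom{m+k}{m}$. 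Matching $t^k/k!$ coefficients yields $\mathbb{E}\,S_m^k = k!\,\frac{m!}{(m+k)!}S(m+k,m) = S(m+k,m)/\binom{m+k}{m}$, as needed.

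The main obstacle, I expect, is the first step: rigorously justifying the passage from the double multinomial sum to $\mathbb{E}\,(S_m)_{\mu+n}$, i.e. bookkeeping the combined $(k_j+l_j)$ indexing correctly so that the binomial convolutions on the left assemble into the product $\prod_j G(\boldsymbol{c},z)$ with the right powers of $z$. One clean way around the bookkeeping is to introduce a bivariate generating function in two variables (one tracking the $l_j$'s, one the $k_j$'s), observe it factors as $\prod_{j=1}^m G(\boldsymbol{c}, \cdot)$ evaluated appropriately, and then specialize; alternatively, interpret $c_{k+l}$ directly as $\mathbb{E}\,(U)_{k+l}$ and push the two multinomial sums inside the expectation using the Vandermonde-type identity $\sum_{k_1+\cdots+k_m=n}\binom{n}{k_1,\ldots,k_m}\prod_j (\theta_j)_{k_j} = (\theta_1+\cdots+\theta_m)_n$ applied twice (once with the $l$'s folded in). The convergence issues are mild since all series in play have positive radius of convergence and all the sums are finite for fixed $n,\mu,m$, so no delicate analytic estimates are required; the content is entirely the algebraic identification of the three expressions.
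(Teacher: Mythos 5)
Your proposal is correct, and its backbone coincides with the paper's: represent $c_n=\mathds{E}(U_1)_n$, identify the left-hand side of \eqref{eq1.6} with $\mathds{E}(S_m)_{\mu+n}$, use the Irwin--Hall density $\rho_m$ for the integral form, and then expand $(\theta)_{\mu+n}$ by Stirling numbers of the first kind and evaluate $\mathds{E}S_m^k$ by Stirling numbers of the second kind. The two places where you genuinely deviate are worth noting. First, the step you correctly flag as the main obstacle --- passing from the double multinomial sum to $\mathds{E}(S_m)_{\mu+n}$ --- is handled in the paper by Lemma~\ref{l2}: one differentiates the product generating function $\mu$ times and applies Leibniz's rule, which yields the identity \eqref{eq2.12} for arbitrary sequences in $\mathcal{G}^\star$. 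Your second suggested route (write $(\theta_j)_{k_j+l_j}=(\theta_j)_{l_j}(\theta_j-l_j)_{k_j}$ and apply the descending-factorial Vandermonde identity to the inner sum and then to the outer sum, all inside the expectation) does work and is arguably more elementary, since it avoids differentiating generating functions altogether; what it loses is the general-purpose Lemma~\ref{l2}, which the paper states for $m$ possibly distinct sequences and presents as being of independent interest. Second, for $\mathds{E}S_m^k=S(m+k,m)/\binom{m+k}{m}$ you derive the formula from the moment generating function $\mathds{E}e^{tS_m}=((e^t-1)/t)^m$ and the standard expansion of $(e^t-1)^m/m!$, whereas the paper simply cites Sun's probabilistic representation \eqref{eq2.15}; these are the same fact, yours being a self-contained derivation of it. To turn your sketch into a complete proof you would only need to commit to one of your two routes for the first step and write out the bookkeeping, which, as you observe, involves only finite sums and no analytic subtleties.
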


We mention that Zhao \cite[Corollary~3.1]{Zha2009} already obtained the second equality in \eqref{eq1.6} for $\mu=0$ using the coefficients method. The proof of Theorem~\ref{th1} is given in Section~\ref{s3}. Such a proof is based on two main ingredients, namely, the notion of binomial convolution of sequences introduced in \cite{AdeLek2017} and the probabilistic representation of $S(n,k)$ in terms of moments of appropriate random variables, shown by Sun \cite{Sun2005} (see also \cite{AdeLek2017}). These tools, together with two technical lemmas concerning general sequences of numbers, which are of interest by themselves, are presented in the following section.

\section{Technical lemmas}\label{s2}

Let $\mathcal{G}$ be the set of real sequences $\boldsymbol{u}=(u_n)_{n\geq 0}$ such that $u_0\neq 0$ and
\begin{equation*}
    \sum_{n=0}^\infty |u_n|\dfrac{r^n}{n!}<\infty,
\end{equation*}
for some radius $r>0$. If $\boldsymbol{u}\in \mathcal{G}$, we denote its generating function by
\begin{equation*}
    G(\boldsymbol{u},z)=\sum_{n=0}^\infty u_n \dfrac{z^n}{n!}.
\end{equation*}
Observe that $\boldsymbol{u}$ and $G(\boldsymbol{u},z)$ determine one each other. If $\boldsymbol{u}$ and $\boldsymbol{v}$ are in $\mathcal{G}$, the binomial convolution of $\boldsymbol{u}$ and $\boldsymbol{v}$, denoted by $\boldsymbol{u}\times \boldsymbol{v}=((u\times v)_n)_{n\geq 0}$, is defined as
\begin{equation*}
    (u\times v)_n=\sum_{k=0}^n \binom{n}{k}u_kv_{n-k}.
\end{equation*}
It turns out (see \cite[Corollary~2.2]{AdeLek2017} that $(\mathcal{G},\times)$ is an abelian group with identity element $\boldsymbol{e}=(e_n)_{n\geq 0}$ given by $e_0=1$, $e_n=0$, $n\in \mathds{N}$. On the other hand (cf. \cite[Proposition~2.1]{AdeLek2017}), if $\boldsymbol{u}^{(k)}=(u_n^{(k)})_{n\geq 0}\in \mathcal{G}$, $k=1,\ldots ,m$, then $\boldsymbol{u}^{(1)}\times \cdots \times \boldsymbol{u}^{(m)}\in \mathcal{G}$ and
\begin{equation}\label{eq2.7}
    (u^{(1)}\times \cdots \times u^{(m)})_n=\sum_{j_1+\cdots +j_m=n} \binom{n}{j_1,\ldots ,j_m} u_{j_1}^{(1)}\cdots u_{j_m}^{(n)}.
\end{equation}
In addition, $\boldsymbol{u}^{(1)}\times \cdots \times \boldsymbol{u}^{(m)}$ is characterized by its generating function
\begin{equation}\label{eq2.8}
    G(\boldsymbol{u}^{(1)}\times \cdots \times \boldsymbol{u}^{(m)},z)=G(\boldsymbol{u}^{(1)},z)\cdots G(\boldsymbol{u}^{(m)},z).
\end{equation}

Let $\mathcal{G}^\star$ be the subset of $\mathcal{G}$ consisting of those $\boldsymbol{u}=(u_n)_{n\geq 0}$ such that $u_n \neq 0$, $n\in \mathds{N}_0$. If $\boldsymbol{u}\in \mathcal{G}^\star$ and $l\in \mathds{N}_0$, we denote by
\begin{equation}\label{eq2.9}
    \boldsymbol{u}(l)=(u_{l+n})_{n\geq 0}.
\end{equation}

\begin{lemma}\label{l1}
If $\boldsymbol{u}\in \mathcal{G}^\star$ and $l\in \mathds{N}_0$, then $\boldsymbol{u}(l)\in \mathcal{G}^\star$ and
\begin{equation*}
    G(\boldsymbol{u}(l),z)=G^{(l)}(\boldsymbol{u},z).
\end{equation*}
\end{lemma}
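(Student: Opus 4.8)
The plan is to unpack both sides as power series and match coefficients. First I would check that $\boldsymbol{u}(l)\in\mathcal{G}^\star$: its entries are $u_{l+n}$, which are nonzero for all $n\in\mathds{N}_0$ since $\boldsymbol{u}\in\mathcal{G}^\star$, and the summability condition is inherited because the tail of a convergent series is convergent—more precisely, for a possibly smaller radius $r'<r$ we have $\sum_{n\ge 0}|u_{l+n}|r'^n/n! \le (l!/r'^l)\sum_{n\ge 0}|u_{l+n}|r'^{l+n}/(l+n)!\cdot\binom{l+n}{l}^{-1}\cdot\text{(bounded factor)}$, or even more simply one notes that $|u_{l+n}|r'^n/n! \le C\,|u_{l+n}|r'^{l+n}/(l+n)!$ for an appropriate constant once $r'$ is fixed, since $n!/(l+n)! \le 1$ and $r'^{-l}$ is a constant. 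So $\boldsymbol{u}(l)\in\mathcal{G}^\star$.

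For the generating function identity, I would differentiate $G(\boldsymbol{u},z)=\sum_{n\ge 0}u_n z^n/n!$ term by term $l$ times. Since the series converges absolutely on $|z|<r$, term-by-term differentiation is justified on that disk, and the $j$-th term $u_j z^j/j!$ contributes, after $l$ differentiations, the term $u_j z^{j-l}/(j-l)!$ when $j\ge l$ and $0$ when $j<l$. Reindexing with $n=j-l$ gives
\begin{equation*}
    G^{(l)}(\boldsymbol{u},z)=\sum_{n=0}^\infty u_{l+n}\dfrac{z^n}{n!}=G(\boldsymbol{u}(l),z),
\end{equation*}
which is exactly the claim.

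I do not expect any real obstacle here; this lemma is essentially a bookkeeping statement. The only point requiring a line of care is the convergence bookkeeping that puts $\boldsymbol{u}(l)$ back in $\mathcal{G}^\star$ (choosing a radius $r'$ slightly smaller than $r$, consistent with the paper's convention that $r>0$ may change from line to line) and the justification of term-by-term differentiation inside the radius of convergence, which is standard for power series.
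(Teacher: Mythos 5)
Your proposal is correct and follows essentially the same route as the paper: term-by-term differentiation of $G(\boldsymbol{u},z)$ followed by reindexing, which is all the paper does. The only (inessential) blemish is the claimed bound $|u_{l+n}|r'^n/n!\le C\,|u_{l+n}|r'^{l+n}/(l+n)!$ with a constant $C$, which fails since $(l+n)!/n!$ grows like $n^l$; the correct justification is simply that the $l$-fold term-by-term derivative of a power series converges on the same disk, the polynomial factor being absorbed by choosing $r'<r$.
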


\begin{proof}
Suppose that $G(\boldsymbol{u},z)$ is defined for $|z|<r$, for some $r>0$. Differentiation term by term gives us
\begin{equation}\label{eq2.10}
    G^{(l)}(\boldsymbol{u},z)=\sum_{n=l}^\infty u_n\, \dfrac{z^{n-l}}{(n-l)!}=G(\boldsymbol{u}(l),z),\quad |z|<r,
\end{equation}
as follows from \eqref{eq2.9}. The proof is complete.\qed
\end{proof}

\begin{lemma}\label{l2}
Let $\boldsymbol{u}^{(k)}\in \mathcal{G}^\star$, $k=1,\ldots ,m$, and $\mu \in \mathds{N}_0$. Then, $(\boldsymbol{u}^{(1)}\times \cdots \times \boldsymbol{u}^{(m)})(\mu)\in \mathcal{G}^\star$ and
\begin{equation}\label{eq2.11}
\begin{split}
    &G((\boldsymbol{u}^{(1)}\times \cdots \times \boldsymbol{u}^{(m)})(\mu),z)=G^{(\mu)}(\boldsymbol{u}^{(1)}\times \cdots \times \boldsymbol{u}^{(m)},z)\\
    &=\sum_{l_1+\cdots +l_m=\mu}\binom{\mu}{l_1,\ldots ,l_m}G(\boldsymbol{u}^{(1)}(l_1)\times \cdots \times \boldsymbol{u}^{(m)}(l_m),z).
\end{split}
\end{equation}

As a consequence,
\begin{equation}\label{eq2.12}
\begin{split}
    &(\boldsymbol{u}^{(1)}\times \cdots \times \boldsymbol{u}^{(m)})_{\mu+n}=\sum_{j_1+\cdots +j_m=\mu+n}\binom{\mu+n}{j_1,\ldots ,j_m}u_{j_1}^{(1)}\cdots u_{j_m}^{(m)}\\
    &=\sum_{l_1+\cdots +l_m=\mu}\binom{\mu}{l_1,\ldots ,l_m}\sum_{k_1+\cdots +k_m=n}\binom{n}{k_1,\ldots ,k_m} u_{k_1+l_1}^{(1)}\cdots u_{k_m+l_m}^{(m)}.
\end{split}
\end{equation}
\end{lemma}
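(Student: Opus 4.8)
My plan is to deduce both parts of Lemma~\ref{l2} from the one-factor shift identity of Lemma~\ref{l1} together with the multiplicativity \eqref{eq2.8} of generating functions under binomial convolution; the only genuinely new ingredient is one application of the multinomial form of the Leibniz rule. Write $\boldsymbol{w}=\boldsymbol{u}^{(1)}\times\cdots\times\boldsymbol{u}^{(m)}$. By \cite[Proposition~2.1]{AdeLek2017} and \eqref{eq2.8}, $\boldsymbol{w}\in\mathcal{G}$ and $G(\boldsymbol{w},z)=G(\boldsymbol{u}^{(1)},z)\cdots G(\boldsymbol{u}^{(m)},z)$ on a common disk $|z|<r$, with $r>0$ the least of the (positive) radii of the factors. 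Applying Lemma~\ref{l1} to $\boldsymbol{w}$ with $l=\mu$ then yields the first equality $G(\boldsymbol{w}(\mu),z)=G^{(\mu)}(\boldsymbol{w},z)$ in \eqref{eq2.11}, together with the claimed membership $\boldsymbol{w}(\mu)\in\mathcal{G}^\star$.

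For the second equality in \eqref{eq2.11}, I would differentiate the product $G(\boldsymbol{u}^{(1)},z)\cdots G(\boldsymbol{u}^{(m)},z)$ exactly $\mu$ times using the general Leibniz rule in multinomial form,
\begin{equation*}
  \frac{d^{\mu}}{dz^{\mu}}\prod_{k=1}^{m}f_{k}(z)
  =\sum_{l_{1}+\cdots+l_{m}=\mu}\binom{\mu}{l_{1},\ldots,l_{m}}\prod_{k=1}^{m}f_{k}^{(l_{k})}(z),
\end{equation*}
which is valid inside the common disk of convergence and follows by an immediate induction on $m$ from the two-factor Leibniz formula. Taking $f_{k}=G(\boldsymbol{u}^{(k)},\cdot)$, Lemma~\ref{l1} identifies $f_{k}^{(l_{k})}(z)=G(\boldsymbol{u}^{(k)}(l_{k}),z)$, while \eqref{eq2.8}, applied now to the shifted sequences $\boldsymbol{u}^{(1)}(l_{1}),\ldots,\boldsymbol{u}^{(m)}(l_{m})$, rewrites $\prod_{k=1}^{m}G(\boldsymbol{u}^{(k)}(l_{k}),z)$ as $G(\boldsymbol{u}^{(1)}(l_{1})\times\cdots\times\boldsymbol{u}^{(m)}(l_{m}),z)$. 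Summing over the compositions $l_{1}+\cdots+l_{m}=\mu$ gives exactly the right-hand side of \eqref{eq2.11}.

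The consequence \eqref{eq2.12} I would obtain by reading off Taylor coefficients. Its first line is \eqref{eq2.7} with $n$ replaced by $\mu+n$, since by \eqref{eq2.9} the coefficient of $z^{n}/n!$ in $G(\boldsymbol{w}(\mu),z)$ equals $(\boldsymbol{w}(\mu))_{n}=(\boldsymbol{w})_{\mu+n}$. For the second line, I would compare coefficients of $z^{n}/n!$ on the two sides of \eqref{eq2.11}: on the left this coefficient is $(\boldsymbol{w})_{\mu+n}$, while on the right, expanding each term $\boldsymbol{u}^{(1)}(l_{1})\times\cdots\times\boldsymbol{u}^{(m)}(l_{m})$ by \eqref{eq2.7} and using $(\boldsymbol{u}^{(i)}(l_{i}))_{k_{i}}=u^{(i)}_{l_{i}+k_{i}}$ from \eqref{eq2.9}, it equals $\sum_{l_{1}+\cdots+l_{m}=\mu}\binom{\mu}{l_{1},\ldots,l_{m}}\sum_{k_{1}+\cdots+k_{m}=n}\binom{n}{k_{1},\ldots,k_{m}}u^{(1)}_{k_{1}+l_{1}}\cdots u^{(m)}_{k_{m}+l_{m}}$, which is the asserted identity.

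There is no real analytic obstacle here: term-by-term differentiation on $|z|<r$ is precisely what the proof of Lemma~\ref{l1} licenses, and the finitely many radii combine harmlessly, the membership $\boldsymbol{w}(\mu)\in\mathcal{G}^\star$ being the one point that uses nonvanishing of the entries of $\boldsymbol{w}$. I therefore expect the only real work to be the combinatorial bookkeeping — checking that the index set $\{l_{1}+\cdots+l_{m}=\mu\}$ produced by the multinomial Leibniz rule, combined with the nested convolution sums of \eqref{eq2.7}, reassembles exactly into the double sum in \eqref{eq2.12} — and that step is routine.
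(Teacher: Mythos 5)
Your argument is correct and is essentially identical to the paper's own proof: both apply Lemma~\ref{l1} to the convolution $\boldsymbol{u}^{(1)}\times\cdots\times\boldsymbol{u}^{(m)}$, use the multinomial Leibniz rule on the product of generating functions from \eqref{eq2.8}, identify each $G^{(l_k)}(\boldsymbol{u}^{(k)},z)$ as $G(\boldsymbol{u}^{(k)}(l_k),z)$, and then read off coefficients via \eqref{eq2.7} and \eqref{eq2.9} to obtain \eqref{eq2.12}. No substantive differences.
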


\begin{proof}
By \eqref{eq2.7}, $\boldsymbol{u}^{(1)}\times \cdots \times \boldsymbol{u}^{(m)}\in \mathcal{G}^\star$, which implies, by virtue of Lemma~\ref{l1}, that $(\boldsymbol{u}^{(1)}\times \cdots \times \boldsymbol{u}^{(m)})(\mu)\in \mathcal{G}^\star$. Suppose that $G(\boldsymbol{u}^{(k)},z)$ is defined for $|z|<r_k$, for some $r_k>0$, $k=1,\ldots ,m$. Denote by $r=\min (r_1,\ldots ,r_m)>0$. By \eqref{eq2.8}, $G(\boldsymbol{u}^{(1)}\times \cdots \times \boldsymbol{u}^{m},z)$ is defined for $|z|<r$ and, a fortiori, so is $G((\boldsymbol{u}^{(1)}\times \cdots \times \boldsymbol{u}^{(m)})(\mu),z)$, as follows from \eqref{eq2.10}.

Hence, applying Lemma~\ref{l1} and using Leibniz's rule for differentiation in \eqref{eq2.8}, we get for $|z|<r$
\begin{equation*}
    \begin{split}
        & G((\boldsymbol{u}^{(1)}\times \cdots \times \boldsymbol{u}^{(m)})(\mu),z)= G^{(\mu)}(\boldsymbol{u}^{(1)}\times \cdots \times \boldsymbol{u}^{(m)},z) \\
         & =\sum_{l_1+\cdots +l_m=\mu} \binom{\mu}{l_1,\ldots ,l_m}G^{(l_1)}(\boldsymbol{u}^{(1)},z)\cdots G^{(l_m)}(\boldsymbol{u}^{(m)},z)\\
         &= \sum_{l_1+\cdots +l_m=\mu} \binom{\mu}{l_1,\ldots ,l_m} G(\boldsymbol{u}^{(1)}(l_1)\times \cdots \times \boldsymbol{u}^{(m)}(l_m),z),
     \end{split}
\end{equation*}
thus showing \eqref{eq2.11}. Finally, \eqref{eq2.12} is an immediate consequence of \eqref{eq2.7}, \eqref{eq2.9}, and \eqref{eq2.11}.\qed
\end{proof}

Lemma~\ref{l2} tells us that in order to compute the right-hand side in \eqref{eq2.12}, we only need to look at the $n$th coefficient in the expansion of $G^{(\mu)}(\boldsymbol{u}^{(1)}\times \cdots \times \boldsymbol{u}^{(m)},z)$. In the case at hand, that is, when $\boldsymbol{u}^{(k)}=\boldsymbol{c}$, $k=1,\ldots ,m$, such a coefficient can be described in probabilistic terms.

To this end, let $(U_j)_{j\geq 1}$ be a sequence of independent identically distributed random variables having the uniform distribution on $[0,1]$ and denote by
\begin{equation}\label{eq2.13}
    S_m=U_1+\cdots +U_m \qquad (S_0=0).
\end{equation}
We will need the following two facts. In first place, the probability density of $S_m$ is $\rho_m(\theta)$, as defined in \eqref{eq1.5} (see, for instance, Feller \cite[p. 27]{Fel1971} or Adell and Sang\"{u}esa \cite[Proposition~2.1]{AdeSan2005}). This means that, for any bounded measurable function $f:[0,m]\to \mathds{C}$, we have
\begin{equation}\label{eq2.14}
    \mathds{E}f(S_m)=\int_0^m f(\theta)\rho_m(\theta)\, d\theta,
\end{equation}
where $\mathds{E}$ stands for mathematical expectation. In second place, Sun \cite{Sun2005} (see also \cite{AdeLek2017}) showed the following probabilistic representation for the Stirling numbers of the second kind
\begin{equation}\label{eq2.15}
    S(n,m)=\binom{n}{m}\mathds{E}S_m^{n-m},\quad m=0,1\ldots ,n.
\end{equation}

\section{Proof of Theorem~\ref{th1}}\label{s3}

We will apply Lemma~\ref{l2} with $\boldsymbol{u}^{(k)}=\boldsymbol{c}$, $k=1,\ldots ,m$. In this respect, note that $\boldsymbol{c}\in \mathcal{G}^\star$, as follows from \eqref{eq1.2}. Following Pyo \textit{et al.} \cite{PyoKimRim2018}, we have from \eqref{eq1.1} and \eqref{eq2.14}
\begin{equation*}
    G(\boldsymbol{c},z)=\int_0^1 (1+z)^\theta\, d\theta=\mathds{E}(1+z)^{U_1},
\end{equation*}
since $\rho_1(\theta)=1$, $\theta \in [0,1]$, as seen from \eqref{eq1.5}. By \eqref{eq2.8}, \eqref{eq2.13}, and \eqref{eq2.14}, this implies that
\begin{equation}\label{eq3.16}
   G (\stackrel{\stackrel{m}{\smile}}{\boldsymbol{c}\times \cdots \times \boldsymbol{c}},z)= \mathds{E}(1+z)^{U_1}\cdots \mathds{E}(1+z)^{U_m}= \mathds{E}(1+z)^{S_m}=\sum_{n=0}^\infty \mathds{E}(S_m)_n \dfrac{z^n}{n!},
\end{equation}
thanks to the independence and identical distribution of the random variables involved. In turn, \eqref{eq3.16} entails that
\begin{equation*}
    G^{(\mu)} (\stackrel{\stackrel{m}{\smile}}{\boldsymbol{c}\times \cdots \times \boldsymbol{c}},z)=\sum_{n=0}^\infty \mathds{E}(S_m)_{\mu+n}\dfrac{z^n}{n!}.
\end{equation*}
We therefore conclude from Lemma~\ref{l2} and \eqref{eq2.14} that the left-hand side in \eqref{eq1.6} equals to
\begin{equation*}
    \mathds{E}(S_m)_{\mu+n}=\int_0^m (\theta)_{\mu+n}\rho_m(\theta)\, d\theta.
\end{equation*}

Finally, we get from \eqref{eq1.4} and \eqref{eq2.15}
\begin{equation*}
    \mathds{E}(S_m)_{\mu+n}=\sum_{k=0}^{\mu+n}s(\mu+n,k)\mathds{E}S_m^k=\sum_{k=0}^{\mu+n}\dfrac{s(\mu+n,k)S(m+k,m)}{\binom{m+k}{m}},
\end{equation*}
thus completing the proof of Theorem~\ref{th1}.\qed

\begin{acknowledgements}
The authors are partially supported by Research Projects DGA (E-64), MTM2015-67006-P, and by FEDER funds.
\end{acknowledgements}

\bibliographystyle{spmpsci}      

 \bibliography{mybibfileAMH2018}

\begin{thebibliography}{10}
\providecommand{\url}[1]{{#1}}
\providecommand{\urlprefix}{URL }
\expandafter\ifx\csname urlstyle\endcsname\relax
  \providecommand{\doi}[1]{DOI~\discretionary{}{}{}#1}\else
  \providecommand{\doi}{DOI~\discretionary{}{}{}\begingroup
  \urlstyle{rm}\Url}\fi

\bibitem{AbrSte1964}
Abramowitz, M., Stegun, I.A.: Handbook of mathematical functions with formulas,
  graphs, and mathematical tables, \emph{National Bureau of Standards Applied
  Mathematics Series}, vol.~55.
\newblock For sale by the Superintendent of Documents, U.S. Government Printing
  Office, Washington, D.C. (1964)

\bibitem{AdeLek2017}
Adell, J.A., Lekuona, A.: Binomial convolution and transformations of {A}ppell
  polynomials.
\newblock J. Math. Anal. Appl. \textbf{456}(1), 16--33 (2017).
\newblock \urlprefix\url{https://doi.org/10.1016/j.jmaa.2017.06.077}

\bibitem{AdeSan2005}
Adell, J.A., Sang\"uesa, C.: Approximation by {$B$}-spline convolution
  operators. {A} probabilistic approach.
\newblock J. Comput. Appl. Math. \textbf{174}(1), 79--99 (2005).
\newblock \urlprefix\url{https://doi.org/10.1016/j.cam.2004.04.001}

\bibitem{Com1974}
Comtet, L.: Advanced combinatorics, enlarged edn.
\newblock D. Reidel Publishing Co., Dordrecht (1974).
\newblock The art of finite and infinite expansions

\bibitem{Fel1971}
Feller, W.: An introduction to probability theory and its applications. {V}ol.
  {II}.
\newblock Second edition. John Wiley \& Sons, Inc., New York-London-Sydney
  (1971)

\bibitem{KomSim2016}
Komatsu, T., Simsek, Y.: Third and higher order convolution identities for
  {C}auchy numbers.
\newblock Filomat \textbf{30}(4), 1053--1060 (2016).
\newblock \urlprefix\url{https://doi.org/10.2298/FIL1604053K}

\bibitem{KomYua2017}
Komatsu, T., Yuan, P.: Hypergeometric {C}auchy numbers and polynomials.
\newblock Acta Math. Hungar. \textbf{153}(2), 382--400 (2017).
\newblock \urlprefix\url{https://doi.org/10.1007/s10474-017-0744-0}

\bibitem{MerSprVer2006}
Merlini, D., Sprugnoli, R., Verri, M.C.: The {C}auchy numbers.
\newblock Discrete Math. \textbf{306}(16), 1906--1920 (2006).
\newblock \urlprefix\url{https://doi.org/10.1016/j.disc.2006.03.065}

\bibitem{PyoKimRim2018}
Pyo, S.S., Kim, T., Rim, S.H.: Degenerate {C}auchy numbers of the third kind.
\newblock J. Inequal. Appl. p. 2018:32 (2018).
\newblock \urlprefix\url{https://doi.org/10.1186/s13660-018-1626-x}

\bibitem{Sun2005}
Sun, P.: Product of uniform distribution and {S}tirling numbers of the first
  kind.
\newblock Acta Math. Sin. (Engl. Ser.) \textbf{21}(6), 1435--1442 (2005).
\newblock \urlprefix\url{https://doi.org/10.1007/s10114-005-0631-4}

\bibitem{Zha2009}
Zhao, F.Z.: Sums of products of {C}auchy numbers.
\newblock Discrete Math. \textbf{309}(12), 3830--3842 (2009).
\newblock \urlprefix\url{https://doi.org/10.1016/j.disc.2008.10.013}

\end{thebibliography}

\end{document}